\documentclass[reqno,a4paper, 11pt]{amsart}

\usepackage[a4paper=true,pdfpagelabels]{hyperref}
\usepackage{graphicx}

\usepackage{latexsym}

\usepackage{amsmath}
\usepackage{amssymb}
\usepackage[T1]{fontenc}
\usepackage[utf8]{inputenc}
\usepackage{lmodern}
\usepackage{verbatim}

\newcommand{\hol}{{\mathcal Hol}}

\newcommand{\D}{\mathbb D}

\newtheorem{theorem}{Theorem}

\newtheorem{corollary}{Corollary}
\newtheorem{proposition}{Proposition}
\theoremstyle{definition}

\theoremstyle{remark}
\newtheorem{remark}{Remark}
\numberwithin{equation}{section}

\DeclareMathOperator{\ARG }{arg}

\newcommand{\HL}{\mathcal {HL}} 

\begin{document}

\title[The essential norm, block sizes, and some Generalized Hilbert operators...]
{The essential norm, block sizes, and some Generalized Hilbert operators on $\ell^p$}

\author[ ]{David Norrbo}

\address{Faculty of Science and Engineering, Åbo Akademi University, 20500 Åbo, Finland}
\email{david.norrbo@ntnu.no}

\keywords{lp spaces, essential norm, Generalized Hilbert operators, Hardy-Littlewood spaces}

\subjclass{46B45, 47A30, 47B37} 



\begin{abstract}
We examine the generalized Hilbert (matrix) operators
$$
H_{g,\gamma} : (a_n) \mapsto \sum_{n=1}^{\infty}   \bigg(\frac{k}{n}\bigg)^{\gamma} \frac{g_k a_n}{n+k}
$$
on the $\ell^p$ spaces, $1<p<\infty$, where $g=(g_n)$ is a sequence and $-1/p<\gamma <1-1/p$. Given a partition of the natural numbers $\bigcup_j I_j = \mathbb N$, we encode $g$ to our investigation via its $\ell^p$-means $G_j$ over the sets $I_j$.  We prove that if $I_j$ increases exponentially, the data $(G_j)$ characterizes boundedness, but is insufficient to determine the exact value of the essential norm. Nonetheless, if the growth rate of $(I_j)$ is subexponential, the corresponding data $(G_j)$, given that the tail converge, is sufficient to determine the exact value, in which case we also calculate it. Moreover, it is shown that the boundedness of $(G_j)$ is sufficient, but not necessary (and necessary, but not sufficient) to ensure $H_{g,\gamma}$ is bounded if the underlying partition $(I_j)$ increases subexponentially (and superexponentially, respectively). Using the dual operator, we also prove that the essential norm of $H_{g,\gamma}$ is comparable with $\limsup_j G(j)$ when $(I_j)$ grows exponentially.

\end{abstract}

\maketitle

\section{introduction}
The Hilbert  operator 
\[
H_g(f)(z) = \int_0^1 f(t) g'(zt) \, dt,\quad f,g\in\hol(\D), \, z\in \D
\] 
was introduced in \cite{Galanopoulos-2014}, where the boundedness was was characterized on the Hardy spaces $H^p, \, 1<p\leq 2$ (Theorem 1), the standard weighted Bergman spaces, $A_\alpha^p,\, p-2> \alpha >-1$ (Theorem 3), and the Dirichlet type spaces $\mathcal D_\alpha^p,\, p-1\geq  \alpha >p-2$ (Theorem 4). In all four cases the boundedness is characterized by $g\in \Lambda(p,1/p)$, the mean Lipschitz space. The investigation on this operator has since been continued, see for example, \cite{Pelaez-2013,Pelaez-2018,Blasco-2022,Guo-2026,Norrbo-2026}, and \cite{Yicen-2026}. One of the common norms used on the mean Lipschitz spaces $\Lambda(p,1/p)$ is given by
\[
\|g\|_{\Lambda(p,\frac{1}{p})} \asymp |g(0)| + |g'(0)| + \sup_{n\in\mathbb N} 2^{n(\frac{1}{p}-1)} \| \Delta_n (g')\|_{H^p}
\]
where $(\Delta_n (g'))(z) = \sum_{k=2^{n-1}}^{2^{n+1}-1} (g')_k z^k $, is the canonical projections of $g'$ onto the finite-dimensional subspaces spanned by $\{z^k : k=2^{n-1},\ldots, 2^{n+1}-1 \},\, n\in\mathbb N$.  Although the derivative of $g$ has been used as the kernel in most of  the previous literature, the convention of writing $g$ instead of $g'$ will be adopted in this article.

We have turned our attention to the reflexive $\ell^p$ spaces ($1<p<\infty$), where the operator is given by
\[
H_g \colon  a \mapsto \sum _{n=0}^\infty \frac{g_k a_n}{n+k+1},
\]
where $a=(a_n),g=(g_k)$ are sequences. Formally, the relation follows from
\[
\sum _{n=0}^\infty \frac{g_k a_n}{n+k+1} = \int_0^1  \sum_{n=0}^\infty a_n t^n  g_k t^k \, dt
\]
in the view of the Taylor coefficients to an analytic function, that is,
\[
H_g\colon \sum_{n=0}^\infty a_n z^n  \mapsto  \sum_{k=0}^\infty \bigg(       \int_0^1  \sum_{n=0}^\infty a_n t^n  g_k t^k \, dt   \bigg)  z^k =      \int_0^1  \bigg(\sum_{n=0}^\infty a_n t^n \bigg)  \bigg( \sum_{k=0}^\infty   g_k (zt)^k   \bigg)\, dt. 
\]

We examine boundedness and the essential norm of $H_g \colon \ell^p \to \ell^p$. In comparison with earlier work, the focus lies on block sizes. We show that the boundedness of
\[
n\mapsto G(n;(2^j);g):=\frac{1}{2^{n+1}-2^n}  \sum_{k=2^n} ^{2^{n+1}-1} |g_k|^p,
\]
is equivalent to $H_g \colon \ell^p \to \ell^p$ being bounded. Moreover, the exponential base $s=2$ can be exchanged with any real number $1<s<\infty$ without altering the statement. However, we also prove that the data $G(n;(2^j);g), \, n\in \mathbb N$, is not sufficient to determine the exact value of the essential norm. Assume that we instead use blocks whose size increase subexponentially, for example, using the partition sequence $(j^2)$, we have
\[
G(n;(j^2);g):=\frac{1}{  (n+1)^2 - n^2    }  \sum_{k=n^2} ^{(n+1)^2-1} |g_k|^p.
\]
Under the assumption that $\lim_{n\to \infty} G(n;(j^2);g)$ exists, we calculate the exact value of the essential norm, where the data  $\lim_{n\to \infty} G(n;(j^2);g)$ is sufficient considering the contribution of $g$; there is also a $p$ dependency. Moreover, (as is necessary from the statement of the result) the limit is, whenever it exists, independent of the particular choice of partition sequence with subexponential growth. Moreover, for a subexponential sequence $d$, the boundedness of $n\mapsto  G(n;d;g)$, is sufficient, but not necessary for $H_g \colon \ell^p \to \ell^p$ to be bounded. 

Most of our results are in fact written for a slightly different, but in our context, more general operator, which allow us to transfer the result to the Hardy-Littlewood spaces. On these spaces, the exact value of the norm (and implicitly of the essential norm) of the classical Hilbert operator (where the sequence $g$ is constant), was obtained in \cite{Daskalogiannis-2025}. 

\section{Preliminaries}\label{sec:Prelim}
Our main results are Theorems \ref{thm:bdd}, \ref{thm:exactEssNorm} and \ref{thm:essentialNormEstimate} with their illustrative corollaries.  The first concerns boundedness, and the two latter, the essential norm. We also highlight Remarks \ref{rem:Gen} and \ref{rem:GenAdjoint} for the generality of the results. In Section \ref{sec:Prelim} we introduce the main definitions and state the main results, with the exception of Theorem \ref{thm:essentialNormEstimate}. Section \ref{sec:Proofs} contains the proofs of the results stated in the previous section. Finally, in Section \ref{sec:DualOp}, all the results involving the dual operator of $H_{g,\gamma}$ can be found, including Theorem \ref{thm:essentialNormEstimate}, which also concerns $H_{g,\gamma}$. 

Throughout this paper, we assume that $1<p<\infty$, $1/p +1/p' = 1$ and $-\frac{1}{p}<\gamma<1-\frac{1}{p}$ are fixed numbers, which will not be stated in the results. Moreover, $d:=(d_n)$ will be a strictly increasing sequence with $d_0=1$. 
Given a sequence $g:=(g_k)$, (can be compared with Taylor coefficients), we define
\[
G(n;d;g):= \frac{1}{d_n-d_{n-1}}  \sum_{k=d_{n-1}} ^{d_n-1} |g_k|^p.
\]
When there's no risk of ambiguity, we write $G(n)$. The notations $\hat{g}$ and $\hat{G}$ will be used similarly to $g$ and $G$, respectively, and a sequence $r$ will be used similarly to $d$. The operator of interest in this note is the following:
\[
H_{g,\gamma} \colon \ell^p \to \ell^p ,\quad H_{g,\gamma} (a) := \sum_{n=1}^{\infty}   \bigg(\frac{k}{n}\bigg)^{\gamma} \frac{g_k a_n}{n+k}.
\]
We will use $a:=(a_n)$ and $b:=(b_n)$ for sequences of complex numbers and $\alpha:=(\alpha_m)$ for sequences of sequences of complex numbers. For a sequence $g$, we define $|g|:= (|g_k|)$.

For convenience, for $a\in \ell^p$ and $K\in\mathbb N$,  we define 

\begin{equation}\label{eq:basicDuality} 
C_K(g;a) := \Bigg(\sum_{k=d_{K-1}}^{\infty}  \Bigg|  \sum_{n=1}^{\infty}   \bigg(\frac{k}{n}\bigg)^{\gamma}  \frac{  a_n g_k  }{n+k} \Bigg|^p \Bigg)^{\frac{1}{p}} = \sup_{\|b\|_{\ell^{p'}} = 1} \sum_{k=d_{K-1}}^{\infty}   \sum_{n=1}^{\infty}   \bigg(\frac{k}{n}\bigg)^{\gamma} \frac{  a_n b_ k g_k  }{n+k}.
\end{equation}
Notice that the $\gamma$- and $p$-dependencies are omitted in the notations $G(n,d,g)$ and $C_K(g;a)$. We have also left out the $d$ dependency from $C_K(g;a)$, since it's not so essential, but it makes the calculations fit well together.

Recall that the essential norm of a bounded operator $S\colon \ell^p\to \ell^p$ is given by $\|S\|_{e,\ell^p\to\ell^p} :=\inf_T\|S-T\|_{\ell^p\to\ell^p}  $, where the infimum is taken over all compact operators $T\colon\ell^p\to\ell^p$. The essential norm for an operator on $\ell^p, \, 1<p<\infty$ coincides with the weak null maximum, defined as
\[
\textrm{wem}(S) := \textrm{wem}(S;\ell^p):=   \sup_{\alpha\in W(B_{\ell^p})}  \limsup_m \| S(\alpha_m) \|_{\ell^p},
\]
where $W(B_{\ell^p})$ is the set of weakly null sequences of elements in the unit ball of $\ell^p$, $B_{\ell^p}$. This can easily be seen by using the complement projection to the finite rank canonical projection, that is, 
\begin{equation}\label{eq:projQ}
Q_k\colon \ell^p\to \ell^p,  \sum_{n=1}^\infty a_n e_n \mapsto \sum_{n=k}^\infty a_n e_n, 
\end{equation}
which satisfies $\| Q_k \|_{\ell^p\to \ell^p} = 1$, where $k\in\mathbb N $. Indeed, for every $k\in\mathbb N$, let $0\neq \alpha_k\in B_{Q_k(\ell^p)} =Q_k(B_{\ell^p})\subset B_{\ell^p}$ be such that
\[
\sup_{a\in B_{\ell^p}} \|  S (Q_k (a)) \| _{\ell^p} -  \|  S (Q_k (\alpha_k)) \| _{\ell^p} < \frac{1}{k}. 
\]
Since $Q_k \alpha_k = \alpha_k$, we have obtained a weak null sequence $(\alpha_k)\subset B_{\ell^p }$ such that
\[
\| S \|_{e,\ell^p\to\ell^p}  = \liminf_k\| SQ_k \|_{e,\ell^p\to\ell^p}  \leq  \liminf_k \sup_{a\in B_{\ell^p}} \|  S (Q_k (a)) \| _{\ell^p}  \leq    \limsup_k \|  S (\alpha_k) \| _{\ell^p} \leq  \textrm{wem}(S). 
\]
 Since  $ \textrm{wem}(S)$, by definition, is dominated by $\| S \|_{e,\ell^p\to\ell^p} $, we are done. The fact that the essential norm equals the weak null maximum can more generally be proven for $(M_p)$-spaces when $1<p<\infty$, see for example \cite{Werner-1992}.

The Hardy-Littlewood space $\HL^p$ $(1<p<\infty)$ can be represented as the complex sequence space generated by the norm
\[
\|a\|_{\HL^p}:= \bigg(  \sum_{n=1}^\infty  a_n^p  n^{p-2}   \bigg)^{\frac{1}{p}}. 
\]

For two nonnegative functions $f$ and $g$ (possibly dependent of some parameters), we write $f\gtrsim g$ with respect to $\eta$ if there is a constant $C$, independent of $\eta$ such that $f(x)\geq C g(x)$ for all $x$ in some implicit set. If $f\gtrsim g$ and $g\gtrsim f$, we write $f\asymp g$.

We are now ready to state the main results. We remark that the factor $(n+k)^{-1}$ could be replaced by $(n+k-1)^{-1}$, which corresponds to the classical generalized Hilbert matrix (see Remark \ref{rem:Gen}).

\begin{theorem}[Boundedness]\label{thm:bdd}
It holds that for any $K\in\mathbb N$ and $a_n\geq 0$ for all $n\in \mathbb N$
\[
C_K(g;a) \leq \sup_{j\geq K} G(j)^{\frac{1}{p}}   \max\bigg\{   \bigg(\frac{d_j}{d_{j-1}}  \bigg)^{1-\gamma}  ,   \bigg(\frac{d_j}{d_{j-1}}  \bigg)^{\gamma} \bigg\} C_K(\hat{g};a),
\]
and 
\[
 \sup_{\|a\|_{\ell^{p}} = 1} C_K(g;a)  \geq  \sup_{j\geq K} G(j)^{\frac{1}{p}} \bigg( \frac{d_j - d_{j-1}}{ d_j  }  \bigg)^{\frac{1}{p}} \bigg(  \int_3^\infty  \frac{x^{-|\gamma|p'}}{  (1+x)^{p'} }  \, dx \bigg)^{\frac{1}{p'}},
\]
 where $\hat{g}$ is any sequence with $\hat{G}\equiv 1$. 

Moreover, 
\begin{enumerate}
\item $\sup_n G(n;d;g)<\infty$ implies $H_{g,\gamma}$ is bounded on $\ell^p$ if and only if $\sup_j d_j/d_{j-1} <\infty$;\label{eq:GfiniteImply}
\item$H_{g,\gamma}$ is bounded on $\ell^p$ implies $\sup_n G(n;d;g)<\infty$ if and only if $\inf_j d_j/d_{j-1} >1$. 
\label{eq:HBddImply}
\end{enumerate}

\end{theorem}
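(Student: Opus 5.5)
For the upper bound I will work with $C_K$ written as an $\ell^p$-norm over $k\ge d_{K-1}$ and split the $k$-range into the blocks $I_j=[d_{j-1},d_j)$, $j\ge K$. On a fixed block the inner quantity
\[
S_k:=\sum_{n}\Big(\frac{k}{n}\Big)^{\gamma}\frac{a_n}{n+k}
\]
depends on $k$, but since $a_n\ge 0$ it is comparable across the block. For $k,k'\in I_j$ we have $k/k'\in (d_{j-1}/d_j,\,d_j/d_{j-1})$, so $(k)^\gamma/(n+k)\le (d_j/d_{j-1})^{\max(\gamma,1-\gamma)}\,(k')^\gamma/(n+k')$. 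Here the factor $(k/k')^\gamma$ is handled directly, and the factor $(n+k')/(n+k)\le \max(1,k'/k)$ is combined with $k^\gamma$ when $\gamma<0$. The precise bookkeeping should yield exactly the maximum of $(d_j/d_{j-1})^{1-\gamma}$ and $(d_j/d_{j-1})^{\gamma}$. It is cleaner to compare $S_k$ directly with $S_{k'}$ for every pair $k,k'\in I_j$. Hence
\[
\sum_{k\in I_j}|g_k|^pS_k^p\le \max_{I_j}S^p\,(d_j-d_{j-1})G(j)\le M_j^p\,G(j)\,(d_j-d_{j-1})\min_{I_j}S^p,
\]
where $M_j$ is that maximum. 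Since $\hat G\equiv1$, we have $(d_j-d_{j-1})\min_{I_j}S^p\le\sum_{k\in I_j}|\hat g_k|^pS_k^p$. Summing over $j\ge K$ gives the first inequality. The main care needed is to get precisely the stated exponents. I expect this to be routine but fiddly.

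For the lower bound I fix $j\ge K$ and test with $b$ supported on $I_j$ with $b_k\propto |g_k|^{p-1}$, normalized in $\ell^{p'}$, together with $a_n$ supported on $n\ge 3d_j$ (or a dual choice), $a_n\propto n^{-?}$. It is simplest to take $a$ to be the extremal of Hölder's inequality for the vector $n\mapsto\sum_{k\in I_j} k^\gamma n^{-\gamma}b_k g_k/(n+k)$. By duality, $\sup_a C_K\ge\big(\sum_n|\sum_{k\in I_j}(k/n)^\gamma b_kg_k/(n+k)|^{p'}\big)^{1/p'}$. I bound each denominator from below using $k\le d_j$, i.e. $(k/n)^{\gamma}/(n+k)\ge$ a function of $n/d_j$ times $d_j^{-1}$, up to the sign of $\gamma$ giving $x^{-|\gamma|}$ with $x=n/d_j\ge3$. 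Then $\sum_{k\in I_j}|g_k|^p\cdot(\text{norm})$ yields $((d_j-d_{j-1})G(j))^{1/p}$. The sum over $n\ge3d_j$ is compared to $d_j^{1/p'}\big(\int_3^\infty x^{-|\gamma|p'}(1+x)^{-p'}dx\big)^{1/p'}$, and the factors of $d_j$ combine to $((d_j-d_{j-1})/d_j)^{1/p}$. Matching the integral exactly (a Riemann sum with a monotone integrand) is the delicate point. I would choose the range $n\ge 3d_j$ precisely so that monotonicity gives the sum $\ge$ the integral.

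For (1) and (2) I need that $H_{\hat g,\gamma}$ is bounded for the constant $\hat g\equiv1$, which follows from the classical Schur test since $-1/p<\gamma<1-1/p$. For (1), if $\sup d_j/d_{j-1}<\infty$ then the first inequality with $K=1$ gives boundedness. Conversely, if the ratios are unbounded, I will construct $g$ with $G$ bounded but $H$ unbounded by placing $g$'s mass at the top of a huge block, e.g. $g_k=((d_j-d_{j-1}))^{1/p}$ at the single index $k=d_j-1$. The lower estimate, adapted with $b=e_k$ at that point, then gives norm $\gtrsim(d_j/d_{j-1})^{1/p}\cdot$const. For (2), if $\inf d_j/d_{j-1}>1$ then $(d_j-d_{j-1})/d_j$ is bounded below and the second inequality gives $G$ bounded. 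Conversely, if $\inf=1$ along a subsequence, I will take $g$ unbounded-in-mean on very short blocks $I_j$ but sparse (e.g. $g_k^p=G_j$ on $I_j$ with $G_j(d_j-d_{j-1})/d_j$ bounded and with the $j$ spaced so that these blocks are far apart). Boundedness then follows by grouping into a coarser exponential partition and applying the first inequality there. Of all these steps, constructing the counterexamples is the most delicate part.
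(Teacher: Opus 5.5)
Your upper bound is the paper's argument, written directly in $\ell^p$ rather than through the dual block weights $B_j$: a pairwise comparison of $A_k$ inside a block, then comparison with $\hat g$. Your pairwise inequality is correct. Your converse for (2) also works; it is a uniform bump on a block with ratio near $1$ instead of the paper's point mass, controlled by dyadic means. There are, however, two genuine gaps.

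\textbf{Lower bound for $\gamma>0$.} Restricting to $n\ge 3d_j$ with a kernel bound uniform in $k\in I_j$ works for $\gamma\le 0$. There $(k/n)^\gamma\ge (n/d_j)^{|\gamma|}$, and you get exactly the stated constant.

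For $\gamma>0$ the factor $k^\gamma$ increases in $k$, so the best uniform bound is $(k/n)^\gamma/(n+k)\ge (d_{j-1}/d_j)^{\gamma}x^{-\gamma}/(d_j(1+x))$. This loss is real. If the mass of $g$ on $I_j$ sits at $k=d_{j-1}$, your quantity is $\asymp S\,(d_{j-1}/d_j)^{\gamma}d_j^{-1/p}$, where $S=((d_j-d_{j-1})G(j))^{1/p}$. No other single scale helps either: testing on $n\le d_{j-1}/3$ loses $(d_{j-1}/d_j)^{1/p'-\gamma}$ when the mass sits at the top. This is fatal for the ``if'' part of (2), where only $\inf_j d_j/d_{j-1}>1$ is assumed and $d_j/d_{j-1}$ may be unbounded.

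The test scale must follow the mass of $g$. Put $D_m=I_j\cap[2^m,2^{m+1})$ and choose $m$ with $2^{-m}\sum_{k\in D_m}|g_k|^p> S^p/(2d_j)$; this is possible because $\sum_{2^m<d_j}2^m<2d_j$. Take $a=2^{-m/p}\chi_{[2^m,2^{m+1})}$. Then $A_k(a)\ge 2^{-|\gamma|-2}2^{-m/p}$ for $k\in D_m$. This gives the lower bound for every $\gamma$, with constant $2^{-|\gamma|-2-1/p}$ in place of the integral, which suffices for (2).

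The paper's route is different. It bounds the block below by $((d_J-d_{J-1})G(J))^{1/p}A_{k_{J,\min}}$ and pairs this with the $\ell^{p'}$-extremal of that single row, whose norm is $\asymp k_{J,\min}^{-1/p}\ge d_J^{-1/p}$. For $\gamma>0$, however, $k_{J,\min}$ depends on $a$, so that step also needs an argument like the one above.

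\textbf{Counterexample in (1).} You put the mass at the wrong end of the block. Row $k$ of $H_{1,\gamma}$ has $\ell^{p'}$-norm $\asymp k^{-1/p}$. Hence $g_{d_j-1}=(d_j-d_{j-1})^{1/p}$ contributes only $\asymp((d_j-d_{j-1})/d_j)^{1/p}\le 1$, not $(d_j/d_{j-1})^{1/p}$.

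In fact your $g$ gives a bounded operator. In a dyadic block $[2^m,2^{m+1})$ the masses $d_j-d_{j-1}$ telescope to at most $2^{m+1}$. So all dyadic means are at most $2$, and your own upper bound with $d=(2^j)$ applies.

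Place the mass at the bottom instead, $g_{d_{j-1}}=(d_j-d_{j-1})^{1/p}$, as the paper does. Then $G\equiv 1$, while the single row $d_{j-1}$ gives $\|H_{g,\gamma}\|\gtrsim (d_j/d_{j-1}-1)^{1/p}$. This is unbounded when $\sup_j d_j/d_{j-1}=\infty$.
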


\begin{corollary}\label{cor:CharactBdd}
Let $\epsilon>0$. $H_{g,\gamma}$ is bounded on $\ell^p$ if and only if $\sup_n G(n;((1+\epsilon)^j);g)<\infty$. Furthermore,
\[
\|H_{g,\gamma}\|_{\ell^p\to\ell^p} \asymp \sup_n G(n)
\]
with respect to $g$.
\end{corollary}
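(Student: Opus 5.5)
The corollary should follow from Theorem \ref{thm:bdd} applied with the geometric partition $d_j=\lceil (1+\epsilon)^j\rceil$ (made strictly increasing; if $\epsilon$ is small, first discard repeated values or start the sequence at a large index and renumber, which only changes finitely many blocks). For such $d$ both growth conditions in parts (\ref{eq:GfiniteImply}) and (\ref{eq:HBddImply}) of Theorem \ref{thm:bdd} hold. The ratios $d_j/d_{j-1}$ tend to $1+\epsilon$, so $\sup_j d_j/d_{j-1}<\infty$. Each ratio exceeds $1$, and the limit exceeds $1$, so $\inf_j d_j/d_{j-1}>1$. Part (\ref{eq:GfiniteImply}) then gives that $\sup_n G(n)<\infty$ implies boundedness, and part (\ref{eq:HBddImply}) gives the converse. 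This settles the equivalence.

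For the norm comparison I will use the two inequalities of Theorem \ref{thm:bdd} with $K=1$. In that case $C_1(g;a)=\|H_{g,\gamma}a\|_{\ell^p}$.

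For the upper bound, I note that $|H_{g,\gamma}a|\le H_{|g|,\gamma}|a|$ entrywise, and $G$ depends only on $|g|$, so it suffices to treat $a_n\ge0$. The first inequality then gives
\[
\|H_{g,\gamma}\|\le \sup_j G(j)^{1/p}\,\sup_j\max\{(d_j/d_{j-1})^{1-\gamma},(d_j/d_{j-1})^{\gamma}\}\,\|H_{\hat g,\gamma}\|
\]
for a fixed $\hat g$ with $\hat G\equiv1$; the natural choice is $\hat g\equiv1$. The middle factor is bounded, since the ratios are bounded and $\gamma$ ranges over a bounded interval. I also need $\|H_{\hat g,\gamma}\|<\infty$ for $\hat g\equiv 1$. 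This follows from part (\ref{eq:GfiniteImply}) applied to $\hat g$, or directly from Schur's test with the homogeneous kernel $(k/n)^\gamma/(n+k)$, which is bounded precisely in the range $-1/p<\gamma<1-1/p$. The resulting constant is independent of $g$.

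For the lower bound, the second inequality gives
\[
\|H_{g,\gamma}\|\ge \sup_j G(j)^{1/p}\,\inf_j\bigl((d_j-d_{j-1})/d_j\bigr)^{1/p}\,c_{p,\gamma}.
\]
Here $c_{p,\gamma}$ is the integral factor, which is finite and positive because $|\gamma|p'>-1$ is automatic and $|\gamma|p'+p'>1$ makes the integral converge at infinity. The infimum is positive since $(d_j-d_{j-1})/d_j=1-d_{j-1}/d_j$ is bounded below by $1-1/\inf_j(d_j/d_{j-1})>0$.

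Together these give $\|H_{g,\gamma}\|\asymp \sup_nG(n)^{1/p}$. I read the statement's $\sup_nG(n)$ as meaning this up to the $p$-th root, i.e.\ the comparison is with $\sup_n G(n)^{1/p}$. The main care point is the bookkeeping of $d_j=\lceil(1+\epsilon)^j\rceil$ for small $\epsilon$, to keep $d$ strictly increasing with $d_0=1$ and the ratio bounds uniform; everything else is direct from Theorem \ref{thm:bdd}.
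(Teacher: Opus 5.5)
Your proposal is correct and follows the paper's route: the paper only says the corollary follows immediately from Theorem \ref{thm:bdd}, and you supply the details. These are the ratio bounds $1<\inf_j d_j/d_{j-1}\le\sup_j d_j/d_{j-1}<\infty$ for the rounded geometric partition, and the $K=1$ cases of the two inequalities, using $\hat g\equiv 1$ together with the boundedness of $H_{1,\gamma}$. You are also right to read the comparison as $\|H_{g,\gamma}\|\asymp\sup_n G(n)^{1/p}$. The norm is homogeneous of degree $1$ in $g$ while $G$ is homogeneous of degree $p$, and this reading agrees with Corollary \ref{cor:HLnorm}.
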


\begin{theorem}[Essential norm]\label{thm:exactEssNorm}
If  $\lim_k d_k/d_{k-1} = 1$ and $\lim_n G(n;(d_j),g) \in [0,\infty[$, then
\[
\|H_{g,\gamma}\|_{e,\ell^p\to\ell^p} =     \frac{\pi}{\sin\Big(  \pi\Big(\frac{1}{p} + \gamma \Big) \Big) }  \big(\lim_n G(n)\big)^{\frac{1}{p}}.
\]
If $\lim_k d_k/d_{k-1} > 1$, there exists no function $F\colon [0,\infty[\to [0,\infty[$, independent of the individual elements in the sequence $g$, such that
\[
\|H_{g,\gamma}\|_{e,\ell^p\to\ell^p} = F(G).
\]
\end{theorem}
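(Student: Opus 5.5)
The plan is to prove the two parts separately: the formula when $d_k/d_{k-1}\to 1$ by matching upper and lower bounds, and the non-existence of $F$ by exhibiting two symbols with equal data $G$ but different essential norms.

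\textbf{Upper bound.} Put $L:=\lim_n G(n)$ and $c_{p,\gamma}:=\pi/\sin(\pi(1/p+\gamma))$. The constant $c_{p,\gamma}$ is the $\ell^p$-norm of the constant-symbol operator $H_{\mathbf 1,\gamma}$; it arises from the Schur test with weights $n^{-1/p}$, since $\int_0^\infty x^{-\gamma-1/p}(1+x)^{-1}\,dx=c_{p,\gamma}$. Compact perturbations do not change the essential norm, so I may replace $g_k$ by $0$ for $k<d_{K-1}$. This bounds the essential norm by $\sup_{\|a\|_{\ell^p}=1}C_K(g;a)$. Now apply the first inequality of Theorem~\ref{thm:bdd} with $\hat g\equiv 1$, which is admissible since $\hat G\equiv1$. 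Because $a\ge0$ can be assumed by taking moduli, this gives
\[
C_K(g;a)\le \sup_{j\ge K}G(j)^{1/p}\max\Big\{\big(\tfrac{d_j}{d_{j-1}}\big)^{1-\gamma},\big(\tfrac{d_j}{d_{j-1}}\big)^{\gamma}\Big\}\,\|H_{\mathbf 1,\gamma}\|_{\ell^p\to\ell^p}.
\]
Letting $K\to\infty$ and using $d_j/d_{j-1}\to1$ and $G(j)\to L$ yields $\|H_{g,\gamma}\|_e\le c_{p,\gamma}L^{1/p}$.

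\textbf{Lower bound.} Here I use the weak null maximum. Take test vectors $a^{(m)}_n=n^{-1/p-\varepsilon_m}$ restricted to $n\ge N_m$ and normalized, with $\varepsilon_m\to0$ and $N_m\to\infty$; these are weakly null. Pair $H_{g,\gamma}a^{(m)}$ with a dual vector $b_k=\overline{\operatorname{sgn} g_k}\,|g_k|^{p-1}k^{-1/p'-\varepsilon_m}$, suitably normalized. The pairing then behaves like
\[
\sum_k |g_k|^p\,k^{-1-\varepsilon_m p}\int_0^\infty \frac{x^{-\gamma-1/p}}{1+x}\,dx
\]
(after the substitution $n=xk$), divided by $\big(\sum_k |g_k|^p k^{-1-\varepsilon p}\big)^{1/p'}$ times the normalization of $a$.

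The key step is a block-averaging argument. Since $d_j/d_{j-1}\to1$, the weight $k^{-1-\varepsilon p}$ is almost constant on each block $[d_{j-1},d_j)$. Hence
\[
\sum_k |g_k|^p k^{-1-\varepsilon p}\approx L\sum_k k^{-1-\varepsilon p}\approx \frac{L}{\varepsilon p},
\]
and the pairing ratio tends to $c_{p,\gamma}L^{1/p}$. The case $L=0$ is already covered by the upper bound.

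The main obstacle is controlling the error terms in this lower bound. The block approximation of the weighted sums must be uniform, the truncation at $N_m$ must cause only a negligible loss, and the inner sum over $n$ must converge to the integral uniformly in $k$. The convergence of the inner sum should follow from the integrability conditions $-1/p<\gamma<1-1/p$ and dominated convergence, combined with letting $\varepsilon\to0$ only after letting $N\to\infty$.

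\textbf{Non-existence of $F$ when $\lim_k d_k/d_{k-1}>1$.} Fix exponential-type blocks and construct two symbols with the same $G$. Let $g$ be constant, so $G\equiv1$. Let $\tilde g$ concentrate the mass of each block on a short subinterval at its left end, with $|\tilde g_k|^p=(d_j-d_{j-1})/|J_j|$ on $J_j\subset[d_{j-1},d_j)$, so again $\tilde G\equiv1$. The first part shows that the constant symbol has essential norm exactly $c_{p,\gamma}$.

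For $\tilde g$, I expect the essential norm to be strictly larger. The reason is that the lower bound of Theorem~\ref{thm:bdd}, applied to the finer subexponential partition refining $J_j$, can be made large: the local $G$-values there are of order $\inf_j d_j/d_{j-1}$ raised to a power. Alternatively, since the operator is translation-like in $\log k$, I would compute a Schur-type lower bound with test vectors concentrated near $J_j$. It suffices to show $\|H_{\tilde g,\gamma}\|_e\ne c_{p,\gamma}$, for example by letting $|J_j|/d_j\to0$ and using the second inequality of Theorem~\ref{thm:bdd} on a refined partition to force the essential norm to blow up. Then $G=\tilde G$ but the essential norms differ, so no such $F$ exists.
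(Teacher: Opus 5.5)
\textbf{The gap is in the second part.} You need a symbol with $G\equiv 1$ whose essential norm is provably different from $c_{p,\gamma}=\|H_{1,\gamma}\|_e$. You only say you expect this for $\tilde g$, and the mechanism you propose cannot deliver it.

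Since $\tilde G\equiv 1$ on the given partition and $d_j/d_{j-1}\to\mu<\infty$, the first inequality of Theorem~\ref{thm:bdd} gives $\|H_{\tilde g,\gamma}\|_e\le\max\{\mu^{1-\gamma},\mu^{\gamma}\}\,c_{p,\gamma}$, however small $|J_j|/d_j$ is. So nothing blows up.

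The second inequality of Theorem~\ref{thm:bdd} on a refinement $r$ does not help either. The local $G$-values on $J_j$ are large, but they enter multiplied by $(r_i-r_{i-1})/r_i$. For $r_i\in(d_{j-1},d_j]$ one has $G(i;r)(r_i-r_{i-1})/r_i\le r_i^{-1}\sum_{k<r_i}|\tilde g_k|^p\le d_j/d_{j-1}$. The constant $\big(\int_3^\infty x^{-|\gamma|p'}(1+x)^{-p'}\,dx\big)^{1/p'}$ is also crude: for $p=2$ it is at most $1/2$, whereas $c_{2,\gamma}\ge\pi$. Hence for $\mu$ close to $1$ this bound cannot certify $\|H_{\tilde g,\gamma}\|_e>c_{p,\gamma}$. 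A sharp comparison is needed.

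\textbf{How the paper does it.} The paper avoids computing any essential norm. Its symbols $g^{(+)}$ and $g^{(-)}$ put the whole block mass ($|g_k|^p=d_j-d_{j-1}$) at $d_{j-1}$ and at $d_j-1$ respectively, so $G^{(\pm)}\equiv 1$. Using \eqref{eq:leqq}, $A_{d_j-1}/A_{d_j}\le(1-1/d_j)^{\gamma-1}$ and $(d_j-d_{j-1})/(d_{j+1}-d_j)\to\mu^{-1}$, it shows
\[
\|R_KH_{g^{(-)},\gamma}a\|_{\ell^p}\le\big((1+\mu^{-1})/2\big)^{1/p}\|R_KH_{g^{(+)},\gamma}a\|_{\ell^p}
\]
for all $a$ and all large $K$.

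\textbf{How your own pair can be saved.} Your Part~1 test-vector computation actually proves, for every bounded $H_{g,\gamma}$,
\[
\|H_{g,\gamma}\|_e\ge c_{p,\gamma}\limsup_{\delta\to0}\Big(\delta\sum_k|g_k|^pk^{-1-\delta}\Big)^{1/p}.
\]
For $\tilde g$ with $|J_j|/d_j\to0$, this Abel mean equals $\lim_{\delta\to0}\delta\sum_j(d_j-d_{j-1})d_{j-1}^{-1-\delta}=(\mu-1)/\log\mu>1$. Hence $\|H_{\tilde g,\gamma}\|_e\ge c_{p,\gamma}\big((\mu-1)/\log\mu\big)^{1/p}>c_{p,\gamma}$.

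\textbf{The first part.} This is fine. Your upper bound is the paper's. Your lower bound is a valid, self-contained alternative to the paper's route. The paper bounds $(\inf_{j\ge K}G(j))^{1/p}\|R_KH_{1,\gamma}a\|_{\ell^p}$ by $\sup_{j\ge K}\max\{(d_j/d_{j-1})^{1-\gamma},(d_j/d_{j-1})^{\gamma}\}\,\|H_{g,\gamma}a\|_{\ell^p}$ via \eqref{eq:leqq} and \eqref{eq:estimateAk}. It then imports $\textrm{wem}(H_{1,\gamma})=c_{p,\gamma}$ from Proposition~\ref{prop:The CasegEquiv1}. Your argument re-derives the sharp constant and needs only an Abel mean of $|g_k|^p$.

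One exponent there needs fixing. To get the same sum $\sum_k|g_k|^pk^{-1-\varepsilon p}$ in the pairing and in $\|b\|_{\ell^{p'}}^{p'}$, you need $b_k\propto|g_k|^{p-1}k^{-1/p'-\varepsilon(p-1)}$. With $k^{-1/p'-\varepsilon}$ the ratio tends to $c_{p,\gamma}L^{1/p}p^{1/p}(p')^{1/p'}/2$, which is too small for $p\neq 2$.
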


The choice $\gamma =  1 - 2/p$, gives us
\begin{corollary}\label{cor:HLnorm}
The operator $H_{g,0}$ is bounded on $\HL^p$ if and only if $\sup_n G(n;(2^j);g)<\infty$, in which case the norm satisfies
\[
\|H_{g,0}\|_{\HL^p \to \HL^p} \asymp \sup_n G(n)^{\frac{1}{p}},
\]
with respect to $g$.

Moreover, the statements of Theorem \ref{thm:exactEssNorm} also hold with $\gamma=0$ and $\ell^p$ replaced by $\HL^p$.
\end{corollary}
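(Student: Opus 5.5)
The plan is to transfer everything from $\ell^p$ to $\HL^p$ through an isometric change of variables, and then read off the claims from Corollary \ref{cor:CharactBdd} and Theorem \ref{thm:exactEssNorm} with $\gamma = 1-2/p$.

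\emph{The isometry.} Define $M\colon \HL^p\to\ell^p$ by $M(a)_n = n^{1-2/p}a_n$. Then $\|Ma\|_{\ell^p}^p=\sum |a_n|^p n^{p-2}=\|a\|_{\HL^p}^p$, so $M$ is an onto isometry with inverse $(M^{-1}b)_k = k^{2/p-1}b_k$.

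\emph{Conjugating the operator.} Compute $M H_{g,0} M^{-1}$. For $b\in\ell^p$,
\[
(MH_{g,0}M^{-1}b)_k = k^{1-\frac2p}\sum_{n=1}^\infty \frac{g_k\, n^{\frac2p-1} b_n}{n+k} = \sum_{n=1}^\infty \Big(\frac{k}{n}\Big)^{1-\frac2p}\frac{g_k b_n}{n+k}.
\]
This is exactly $H_{g,\gamma}b$ with $\gamma=1-2/p$. We must check that this $\gamma$ is admissible, i.e.\ $-1/p<1-2/p<1-1/p$. Both inequalities reduce to $1/p<1$, which holds since $p>1$. Hence $H_{g,0}$ on $\HL^p$ is isometrically similar to $H_{g,1-2/p}$ on $\ell^p$. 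In particular:
\begin{itemize}
\item $H_{g,0}$ is bounded on $\HL^p$ if and only if $H_{g,1-2/p}$ is bounded on $\ell^p$, with equal norms;
\item the essential norms are equal as well, because $T\mapsto MTM^{-1}$ is a bijection between the compact operators on $\HL^p$ and on $\ell^p$ that preserves operator norms.
\end{itemize}

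\emph{Boundedness and the norm.} Apply Corollary \ref{cor:CharactBdd} with $\epsilon=1$, i.e.\ $d_j=2^j$. It gives boundedness if and only if $\sup_n G(n;(2^j);g)<\infty$, and the norm equivalence with $\sup_n G(n)^{1/p}$. The first display of Theorem \ref{thm:bdd} gives the upper bound $\sup_j G(j)^{1/p}$ times a constant; the second gives a matching lower bound with constants depending only on $p$ and $\gamma$. So the exponent $1/p$ in the present statement is the correct normalization, consistent with Theorem \ref{thm:bdd}.

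\emph{The essential norm.} Transport Theorem \ref{thm:exactEssNorm} with $\gamma=1-2/p$. Then $1/p+\gamma=1-1/p=1/p'$, and $\sin(\pi/p')=\sin(\pi/p)$. The constant therefore becomes $\pi/\sin(\pi/p)$, which is the value of that constant at $\gamma=0$. This matches the statement's ``$\gamma=0$'' reading: the formula takes the same form as the $\ell^p$ formula with $\gamma=0$.

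The nonexistence part also transfers. Suppose some $F$ determined $\|H_{g,0}\|_{e,\HL^p\to\HL^p}$ from $G$. Then the same $F$ would determine $\|H_{g,1-2/p}\|_{e,\ell^p\to\ell^p}$, contradicting Theorem \ref{thm:exactEssNorm}.

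The only step requiring care is the exponent bookkeeping in the conjugation and the admissibility check on $\gamma$. Everything else is a direct transfer along the isometry.
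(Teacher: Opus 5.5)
Your proposal is correct and follows the paper's own argument: conjugate by the weighted isometry between $\HL^p$ and $\ell^p$, identify the result as $H_{g,1-2/p}$ on $\ell^p$, and use $\sin(\pi(1/p+1-2/p))=\sin(\pi/p)$ to transfer Corollary~\ref{cor:CharactBdd} and Theorem~\ref{thm:exactEssNorm}. Two minor points: your direction $a_n\mapsto n^{1-2/p}a_n$ for the map $\HL^p\to\ell^p$ is the right one, and the second admissibility inequality $1-2/p<1-1/p$ actually reduces to $1/p>0$ rather than to $1/p<1$, which is harmless.
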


We remark that if $\lim_k d_k/d_{k-1} = 1$ and $\lim_n G(n;d;g)=:L \in [0,\infty[$, then the value of the limit is independent of the choice of sequence $d$. This can be seen by first proving that $\lim_n G(n;d;g)=:L$ implies 
\[
\lim_n \frac{1}{d_n} \sum_{j=1}^{ d_n-1 } |g_j|^p = L,
\]
which holds without the condition $\lim_k d_k/d_{k-1} = 1$. Then impose this condition to prove that the supscript in the sum can be changed to any $k_n\in[d_{n-1},d_n]$, without altering the value of the limit. It follows that 
\[
\lim_n \frac{1}{n} \sum_{j=1}^{ n } |g_j|^p = L,
\]
which is independent of the particular choice $d$. If another sequence $r$, satisfying the same assumptions, would give a different value, we would obtain a contradiction, and hence, we are done.

We also mention that it follows from Theorem \ref{thm:bdd} that 
\[
\{g : \sup_n G(n,d,g)<\infty \text{ for some } d \text{ with } \lim_j d_j/d_{j-1} = 1 \}
\]
is a strict subset of $\{g : \sup_n G(n,(2^j),g)<\infty \}$.

Before we continue to the proofs, notice that by \eqref{eq:basicDuality}, it is sufficient to assume $g_n\geq 0$, after which we can also assume $a_n,b_n\geq 0$ for every $n$, due to the argument invariance when calculating the norm, essential norm, or weak null maximum (triangle inequality with modulus for the upper bound, and reducing the set we take supremum over for the lower bound). In particular, $C_K(g;a) = C_K(|g|;a)$. Given that $S$ is bounded on $\ell^p$ and $\alpha\in W(B_{\ell^p})$, $m\mapsto S(\alpha_m)$ will also be weakly null, hence, altering the starting index of the sum in $H_{g,\gamma}$ and the sum in the norm does not change the value of the essential norm. More generally, we have the following:

\begin{remark}\label{rem:Gen}
The operators 
\[
\ell^p \to \ell^p,\quad   (a_n) \mapsto \bigg[ k\mapsto   \sum_{n=1}^{\infty}   \bigg(\frac{k+l_2}{n+l_3}\bigg)^{\gamma} \frac{  a_{n+l_4} g_{k+k_1}  }{n+k+l_1}  \bigg],
\]
where $-2< l_1<\infty$, $-1<l_2,l_3<\infty$ and $l_4,k_1\in \mathbb N\cup \{0\}$ have the same essential norm, weak null maximum, and criteria for boundedness as $H_{g,\gamma}$ on $\ell^p$.
\end{remark}

Since the lower bound for the generalized Hilbert matrix operator with $g\equiv 1$ is obtained using a weakly null sequence, we have:

\begin{proposition}\label{prop:The CasegEquiv1}
For every $K\in\mathbb N$, we have
\[
\textrm{wem}(H_{1,\gamma}) = \sup_{\|a\|_{\ell^p}=1}C_K(1;a) = \| H_{1,\gamma} \|_{e,\ell^p\to\ell^p} = \| H_{1,\gamma} \|_{\ell^p\to\ell^p} =  \frac{\pi}{\sin\Big(  \pi\Big(\frac{1}{p} + \gamma \Big) \Big) }.
\]

\end{proposition}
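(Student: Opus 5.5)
The plan is to establish the chain of inequalities
\[
\frac{\pi}{\sin(\pi(\frac1p+\gamma))} \le \textrm{wem}(H_{1,\gamma}) \le \sup_{\|a\|_{\ell^p}=1}C_K(1;a) \le \| H_{1,\gamma} \|_{\ell^p\to\ell^p} \le \frac{\pi}{\sin(\pi(\frac1p+\gamma))},
\]
and to close the remaining gap $\textrm{wem}(H_{1,\gamma}) = \|H_{1,\gamma}\|_{e,\ell^p\to\ell^p}$ by the identity between the essential norm and the weak null maximum already proved in Section \ref{sec:Prelim}.

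\emph{Upper bound.} I would use Schur's test with test function $h(n)=n^{-1/(pp')}$, or more precisely the weighted Hardy–Hilbert inequality. The kernel $K(n,k)=(k/n)^\gamma/(n+k)$ is homogeneous of degree $-1$. The standard Schur argument with power weights gives
\[
\|H_{1,\gamma}\|\le \int_0^\infty \frac{t^{-\gamma-1/p}}{1+t}\,dt = \frac{\pi}{\sin(\pi(\frac1p+\gamma))}.
\]
Here the condition $-1/p<\gamma<1-1/p$ is exactly the convergence condition. Passing from integrals to sums uses that $x\mapsto (k/x)^\gamma x^{-1/p}/(x+k)$ is decreasing after a suitable choice of weight exponent. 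If monotonicity fails for some $\gamma$, I would compare each sum with an integral over $[n-1,n]$, or split into ranges. The inequality $C_K(1;a)\le \|H_{1,\gamma}a\|$ is trivial, since $C_K$ only drops the first rows.

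\emph{Lower bound.} This is the main step. I would take $a^{(N)}_n = n^{-1/p-\varepsilon_N}$ truncated or normalized, supported on $n\ge N_m$ with $N_m\to\infty$. A cleaner alternative is $a_n=n^{-1/p}$ for $N\le n\le M$, normalized by $(\log(M/N))^{1/p}$, with $M/N\to\infty$. These are weakly null because their supports move to infinity and their $\ell^\infty$ norms tend to zero. For $k$ in the "middle range" $[N R, M/R]$ with $R$ large, the inner sum behaves like
\[
\sum_n (k/n)^\gamma \frac{n^{-1/p}}{n+k}\approx k^{-1/p}\int_{N/k}^{M/k} \frac{t^{-\gamma-1/p}}{1+t}\,dt \approx k^{-1/p}\cdot\frac{\pi}{\sin(\pi(\frac1p+\gamma))},
\]
by Riemann sum approximation. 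Summing the $p$-th powers over the middle range gives $\log(M/(NR^2))$ times the constant to the power $p$. Dividing by $\log(M/N)$ and letting $M/N\to\infty$ first, then $R\to\infty$, yields the lower bound for $\limsup_m\|H_{1,\gamma}\alpha_m\|$. This bounds $\textrm{wem}$ from below.

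Finally, restricting to $k\ge d_{K-1}$ does not affect the argument, because the middle range lies beyond $d_{K-1}$ once $N$ is large. This gives the lower bound for $\sup C_K(1;a)$ as well, and the chain closes. The main obstacle is the careful Riemann-sum error control in the lower bound, uniformly over the middle range. This is handled by homogeneity: the error is $O(1/k)$ relative to the main term for $t$ bounded away from $0$ and $\infty$, and the integrable tails are cut off by $R$.
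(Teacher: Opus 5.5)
Your proposal is correct and is essentially the argument the paper relies on. The paper cites Yang's weighted Hardy--Hilbert inequality, whose proof uses an upper bound via H\"older/Schur with the power weight $n^{-1/(pp')}$ and sharpness via a normalized $n^{-1/p}$-type power sequence, which is weakly null; you reconstruct exactly this, and the sandwich $\pi/\sin(\pi(\frac{1}{p}+\gamma))\le \textrm{wem}(H_{1,\gamma}),\ \sup_{\|a\|_{\ell^p}=1}C_K(1;a) \le \|H_{1,\gamma}\|_{\ell^p\to\ell^p}\le \pi/\sin(\pi(\frac{1}{p}+\gamma))$, together with $\textrm{wem}=\|\cdot\|_{e}$, closes the chain. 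Your hedges are unnecessary: with this weight, both $x\mapsto x^{-\gamma-1/p}/(x+k)$ and $x\mapsto x^{\gamma-1/p'}/(x+n)$ are decreasing for every admissible $\gamma$, so $\sum_{n\ge 1}f(n)\le\int_0^\infty f$ and $\sum_{n=N}^{M}f(n)\ge\int_N^{M}f$ hold exactly, and no Riemann-sum error control is needed.
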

For a proof, we refer to \cite[Theorem 3.1.]{Yang-2005} in which we use $\lambda=\alpha=1$ and $\phi_p= 1-1/p-\gamma $, and a substitution of $a_n$ and $b_n$ to obtain $\ell^p$-norms on the right-hand side.

\section{Proofs}\label{sec:Proofs}

We begin to note that Corollary \ref{cor:CharactBdd} follows immediately from Theorem \ref{thm:bdd}. Concerning Corollary \ref{cor:HLnorm}, we note that any weighted $\ell^p$, with $\|a\|^p =\sum_n w_n a_n^p$, where $w_n> 0$ for all $\mathbb N$ is isometrically isomorphic to $\ell^p$. Therefore, the essential norm and the weak null maximum coincides. The statements now follows by applying the isometric isomorphism $\HL^p \to \ell^p$, $a_n \mapsto a_n n^{2/p-1}$ and note that $\sin\big(\pi\big(  1/p  + 1 - 2/p \big)\big) = \sin\big(\pi\big(  1/p  \big)\big) $.  Henceforth, we assume $a_n,b_n,g_n \geq 0$ for all $n\in\mathbb N$.

Define
\[
B_j:= \bigg( \sum_{k=d_{j-1}}^{d_j-1} b_j^{p'} \bigg)^{\frac{1}{p'}} \quad \text{ and } \quad  A_k:= A_k((a_n);\gamma) := \sum_{n=1}^{\infty} \bigg( \frac{k}{n} \bigg)^{\gamma}\frac{a_n}{n+k}. 
\]
Let
\[
k_{j,\min} := \ARG\min\{ A_k  :  k\in [d_{j-1},d_j) \} \quad \text{ and } \quad k_{j,\max} := \ARG\max\{ A_k  :  k\in [d_{j-1},d_j) \}
\]
Notice that if $\gamma\leq 0$, then $A_k$ is decreasing, but for $\gamma>0$ it is not clear, which one of $k_{j,\max} $ and $k_{j,\min} $ is bigger. Since we can assume $a_n\geq 0$ for all $n\in\mathbb N$, we have
\begin{equation}\label{eq:estimateAk} 
\begin{split}
1&\leq  \frac{  A_{k_{j,\max}}   }{       A_{k_{j,\min}}     } =   \frac{     \sum_{n=N}^{\infty}   \big( \frac{ k_{j,\max} }{n} \big)^{\gamma}   \frac{a_n}{ n+k_{j,\min} }        \frac{   n+k_{j,\max}  + k_{j,\min} - k_{j,\max}  }{     n+k_{j,\max}    }     }{          \sum_{n=N}^{\infty} \big( \frac{k_{j,\min}}{n} \big)^{\gamma}   \frac{a_n}{n+k_{j,\min}}      }\\
&\leq  \bigg( \frac{ k_{j,\max} }{ k_{j,\min} } \bigg)^{\gamma}  \sup_n \bigg( 1  +  \frac{ k_{j,\min} - k_{j,\max}   }{ n + k_{j,\max}  } \bigg) = \max\bigg\{  \bigg(  \frac{k_{j,\min}  }{ k_{j,\max} }  \bigg)^{1-\gamma}  ,   \bigg(  \frac{ k_{j,\max} }{k_{j,\min}  }  \bigg)^{\gamma}   \bigg\}   \\
&\leq   \max\bigg\{   \bigg(\frac{d_j}{d_{j-1}}  \bigg)^{1-\gamma}  ,   \bigg(\frac{d_j}{d_{j-1}}  \bigg)^{\gamma} \bigg\}.
\end{split}
\end{equation}

Another important estimate is the following: 
\begin{equation}\label{eq:leqq} 
 \sup_{ \|B\|_{\ell^{p'}} \leq 1   } \sum_{j=K}^{\infty}  B_j\big( (d_j- d_{j-1})G(j)  \big)^{\frac{1}{p}}   A_{k_{j,\min}}   \leq C_K(g;a) \leq   \sup_{ \|B\|_{\ell^{p'}} \leq 1  }  \sum_{j=K}^{\infty}  B_j\big( (d_j- d_{j-1})G(j)  \big)^{\frac{1}{p}}  A_{k_{j,\max}}.
\end{equation}

To see that \eqref{eq:leqq} holds, for $B_j\geq 0, \, j\in \mathbb N$, the supremum over $b$ in the expression for $C_K$ can be manipulated as follows:
\begin{align*}
\sup_{\|(b_n)_{n=1}^{\infty}\|_{\ell^{p'}} \leq 1} \sum_{k=d_{K-1}}^{\infty} b_k g_k A_k &= \sup_{\|(B_n)_{n=K}^{\infty}\|_{\ell^{p'}} \leq 1}  \sup_{ \substack{   \|(b_{d_{n-1}},\ldots, b_{d_n-1})\|_{\ell^{p'}}   \leq  B_n   \\  n=K,\ldots, \infty }  }  \sum_{j=K}^{\infty} \sum_{k=d_{j-1}}^{d_j-1} b_k g_k A_k \\
&=  \sup_{\|(B_n)_{n=K}^{\infty}\|_{\ell^{p'}} \leq 1} \sum_{j=K}^{\infty}  \sup_{ \|(b_{d_{j-1}},\ldots, b_{d_j-1})\|_{\ell^{p'}} \leq  B_j  } \sum_{k=d_{j-1}}^{d_j-1}  b_k g_k A_k ,
\end{align*}
since the only dependence on $b$ is via $b_k$, hence, only one term in the leftmost/outer sum (in the double sum expressions) is dependent on the parameters $(b_{d_{n-1}},\ldots, b_{d_n-1})$ in the supremum for a fixed $n$. Moreover,
\[
 \sup_{ \|(b_{d_{j-1}},\ldots, b_{d_j-1})\|_{\ell^{p'}} \leq  B_j  }  \sum_{k=d_{j-1}}^{d_j-1}  b_k g_k A_k =  B_j \bigg(  \sum_{k=d_{j-1}}^{d_j-1}  g_k^p A_k^p   \bigg)^{\frac{1}{p}}.
\]
The lower bound now follows from
\begin{equation*}\label{eq:leqq1} 
\bigg(  \sum_{k=d_{j-1}}^{d_j-1}  g_k^p A_k^p   \bigg)^{\frac{1}{p}}  \geq    \bigg(  \sum_{k=d_{j-1}}^{d_j-1}  g_k^p A_{k_{j,\min}}^p   \bigg)^{\frac{1}{p}}  =  \big( (d_j- d_{j-1})G(j)  \big)^{\frac{1}{p}}  A_{k_{j,\min}}.  
\end{equation*}
The upper estimate is shown in a similar way.

Combining \eqref{eq:estimateAk}  and \eqref{eq:leqq} yields 
\[
C_K(g;a) \leq \sup_{j\geq K} G(j)^{\frac{1}{p}}   \max\bigg\{   \bigg(\frac{d_j}{d_{j-1}}  \bigg)^{1-\gamma}  ,   \bigg(\frac{d_j}{d_{j-1}}  \bigg)^{\gamma} \bigg\} C_K(\hat{g};a),
\]
 where $\hat{g}$ is any sequence with $\hat{G}\equiv 1$.

\begin{proof}[Proof of Theorem \ref{thm:bdd}]

We have already established the upper estimate, which yields the ``if'' part of \eqref{eq:GfiniteImply}. For the lower bound, choose $J\in \mathbb N$ with $J\geq K$. We put $B_J=1$ and $B_j = 0$ for $j\in \mathbb N\setminus \{J\}$, that is, the Jth unit vector. We obtain
\begin{equation}\label{eq:LowerBoundForNorm}
\begin{split}
 \sup_{  \|a\|_{\ell^p} = 1 }  \sum_{j=K}^{\infty}  B_j\big( (d_j- d_{j-1}) & G(j)  \big)^{\frac{1}{p}}  A_{k_{j,\min}}  =  \sup_{  \|a\|_{\ell^p} = 1 } \big( (d_J- d_{J-1})G(J)  \big)^{\frac{1}{p}}  A_{k_{J,\min}}   \\
&=  \big( (d_J- d_{J-1})G(J)  \big)^{\frac{1}{p}}   \Bigg(    \sum_{n=1}^{\infty}   \bigg(  \bigg( \frac{ k_{J,\min}  }{n} \bigg)^{\gamma}\frac{ 1 }{n+k_{J,\min}}     \bigg)^{p'}        \Bigg)^{\frac{1}{p'}}.
\end{split}
\end{equation}

It remains to estimate 
\[
  \sum_{n=1}^{\infty}  \frac{   n^{-\gamma {p'}}   }{(n+k_{J,\min})^{p'}}.   
\]
If $0\leq \gamma <1-1/p$ the terms are decreasing and a lower bound is given by
\[
\int_1^{\infty}   \frac{   x^{-\gamma {p'}}  \, dx  }{(x+k_{J,\min})^{p'}} =  k_{J,\min}^{-({p'}\gamma+ {p'}-1)}  \int_{\frac{1}{ k_{J,\min} }  }^{\infty}   \frac{   x^{-\gamma {p'}}  \, dx  }{(x+1)^{p'}} . 
\]
If $-1/p<\gamma < 0$, $x\mapsto \frac{   x^{-\gamma {p'}}  \, dx  }{(x+k_{J,\min})^{p'}}$ is first increasing and then decreasing on $(0,\infty)$. The maximum is attained at $x=\frac{\gamma k_{J,\min}}{1-\gamma}$; therefore, the sum will dominate the integral if we on the increasing part compare the integral with rectangles whose height is given by the right most value and on the decreasing part compare with the leftmost. To be able to do this pairing we must exclude from the path of integration a suitable large neighborhood of the maximum of the integrand, which here has here been chosen to be $[\frac{\gamma k_{J,\min}}{1-\gamma} -1, \frac{\gamma k_{J,\min}}{1-\gamma} +1]$. 

It follows that
\[
  \sum_{n=1}^{\infty}  \frac{   n^{-\gamma {p'}}   }{(n+ k_{J,\min} )^{p'}} \geq  k_{J,\min} ^{-({p'}\gamma+ {p'}-1)}  M_L(p,\gamma,d,J),
\]
where
\[
M_L(p,\gamma,d,J) := \begin{cases}
\int_{  \frac{1}{ k_{J,\min} }   }^{\infty} \frac{   x^{-\gamma {p'}}\, dx  }{  (x+1)^{p'}} &  \gamma\geq  0\\
\Bigg(   \int_{   \frac{1}{ k_{J,\min} }     }^{\frac{\gamma }{ 1-\gamma } - \frac{1}{ k_{J,\min} }    }  +    \int_{  {\frac{\gamma }{ 1-\gamma } +  \frac{1}{ k_{J,\min} }   }     }^{\infty}\Bigg)  \frac{   x^{-\gamma {p'}}\, dx  }{ (x+1)^{p'}} & \gamma< 0 ;
\end{cases} 
\]
the value of the integrals should be interpreted as zero if the integral path is negative. Combining this with \eqref{eq:LowerBoundForNorm} and \eqref{eq:leqq}, we have now proved that
\[
\sup_{  \|a\|_{\ell^p} = 1 } C_K(g;a) \geq   G(J)^{\frac{1}{p}} \bigg( \frac{d_J - d_{J-1}}{ k_{J,\min}  }  \bigg)^{\frac{1}{p}}  M_L(p,\gamma,d,J)^{1-\frac{1}{p}}
\]
for every integer $J\geq K$. Since $M_L(p,\gamma,d,J)^{1-\frac{1}{p}}\asymp 1$ with respect to $J\in\mathbb N$, we see that $\sup_{J\geq 1} G(J)<\infty$ is necessary for $H_{g,\gamma}$ to be bounded on $\ell^p$ if 
\[
\liminf_{J\to\infty}\frac{d_J - d_{J-1}}{ k_{J,\min}  } >0.
\]
Since $d$ is strictly increasing, the $\liminf_J$ can be exchanged with $\inf_J$. The ``if'' part of the statement \eqref{eq:HBddImply} now follows from the estimate
\[
1 -  \frac {d_{J-1}}{d_J}   \leq  \frac{d_J - d_{J-1}}{ k_{J,\min}  } \leq \frac {d_J}{d_{J-1}}  - 1. 
\]

To prove the ``only if'' part of \eqref{eq:GfiniteImply}, consider the function $g$ defined by $g_{k} = (d_{j+1}- d_j)^{1/p}$, when $k=d_j$ and zero otherwise. It is clear that $G \equiv 1$. Moreover,
\[
C_1(g;a)  =  \sup_{ \|B\|_{\ell^{p'}} \leq 1   } \sum_{j=1}^{\infty} B_j \big( (d_j- d_{j-1})G(j)  \big)^{\frac{1}{p}}  A_{d_{j-1}}.
\] 
Therefore, from the same calculations used for the lower bound, we obtain
\[
\sup_{  \|a\|_{\ell^p} = 1 } C_1(g;a) \gtrsim   G(J)^{\frac{1}{p}} \bigg( \frac{d_J - d_{J-1}}{ d_{J-1}  }  \bigg)^{\frac{1}{p}}  =\bigg( \frac{d_J }{ d_{J-1}  } -1  \bigg)^{\frac{1}{p}},
\]
with respect to $J$, and the statement follows.

To prove the ``only if'' part of \eqref{eq:HBddImply}, consider the function $g$ defined as follows. Put $J_0=1$ and $g_{J_0}=1$. Then, for $k=1$, choose an index $J_k$ such that $d_{J_k} > 2d_{J_{k-1}}$ and  $d_{J_k}/d_{J_k-1} - 1 \leq 1/k$ (pay attention to which subscript has a ``$-1$''). Define $g_{d_{J_k}} = d_{J_k}^{\frac{1}{p}}$. Repeat the process for all $k\in\mathbb N$. Define $g$ to be zero elsewhere.

Define $r_0= d_{J_0}$ and $r_1=2d_{J_0}$. Then define $r_2=2d_{J_0}+1,r_3=2d_{J_0}+2,\ldots, r_{m_1} = d_{J_1}$. Furthermore, $r_{m_1+1} = 2d_{J_1}$ and then we continue with blocks of size one until we hit the value $d_{J_2}$. Now $\sup _k r_k/r_{k-1} \leq 2$, and 
\[
\frac{1}{r_k-r_{k-1}} \sum_{j=r_{k-1}}^{r_k-1} g_j^p
\]  
contains at most one term that is nonzero.  Therefore,
\[
\sup_m G(m;r;g) = \sup_m\frac{1}{r_m-r_{m-1}} \sum_{j=r_{m-1}}^{r_m-1} g_j^p \leq \sup_k  \frac{  \Big( d_{J_k}^{\frac{1}{p}}  \Big)^p     }{ 2d_{J_k}   - d_{J_k}     } = 1,
\]
and by \eqref{eq:GfiniteImply}, $H_{g,\gamma}$ is bounded on $\ell^p$. However,
\[
\frac{1}{d_{J_k} - d_{J_k-1}  } \sum_{j=d_{J_k-1} }^{d_{J_k} -1} g_j^p = \frac{  d_{J_k-1}  }{d_{J_k} - d_{J_k-1}  } \geq  k,
\]
which proves $n\mapsto G(n;d;g)$ is not bounded.

\end{proof}

\begin{proof}[Proof of Theorem 2]

First, assume $d_j/d_{j-1}\sim 1$ and that $\lim_k G(k)$ exists. We will prove that 
\[
\|H_{g,\gamma}\|_e= \big(  \lim_k G(k) \big)  \frac{\pi}{\sin \Big( \pi\Big(\frac{1}{p}+\gamma \Big)\Big) }.
\]

For the lower bound, we will make use of the projection $R_K:=Q_{d_{K-1}}$, where $Q_{d_{K-1}}$ is the complement canonical projection, defined in \eqref{eq:projQ}. Recall that in the paragraph right before Remark \ref{rem:Gen}, it was mentioned that the weakly null maximum ignores the starting index of the norm, that is,  $\textrm{wem}(R_K S;\ell^p) = \textrm{wem}(S;\ell^p)$ for every $K\in\mathbb N$. For $(a_n)\in\ell^p$, we have
\begin{equation}
\begin{split}
\Big( \inf_{j\geq K} G(j) \Big) \|R_K H_{1,\gamma} (a)\|_{\ell^p}&\leq \Big( \inf_{j\geq K} G(j) \Big) \sup_{ \|B\|_{\ell^{p'}} = 1 } \sum_{j=K}^{\infty}  B_j(d_j- d_{j-1})^{\frac{1}{p}}A_{k_{j,\max}} \\
&\leq   \sup_{ \|B\|_{\ell^{p'}} = 1 } \sum_{j=K}^{\infty}  B_j\big( (d_j- d_{j-1})G(j)  \big)^{\frac{1}{p}}  \frac{     A_{k_{j,\max}}    }{   A_{k_{j,\min}}     }A_{k_{j,\min}} \\
&\leq \sup_{j\geq K} \max\bigg\{   \bigg(\frac{d_j}{d_{j-1}}  \bigg)^{1-\gamma}  ,   \bigg(\frac{d_j}{d_{j-1}}  \bigg)^{\gamma} \bigg\} \|H_{g,\gamma}(a)\|_{\ell^p}.
\end{split}
\end{equation}
Considering weakly null sequences, we obtain for any $K\in\mathbb N$
\[
\Big( \inf_{j\geq K} G(j) \Big) \textrm{wem}(H_{1,\gamma}) \leq  \sup_{j\geq K}   \max\bigg\{   \bigg(\frac{d_j}{d_{j-1}}  \bigg)^{1-\gamma}  ,   \bigg(\frac{d_j}{d_{j-1}}  \bigg)^{\gamma} \bigg\}  \|H_{g,\gamma}\|_{e,\ell^p\to \ell^p}
\] 
from which the statement follows from letting $K\to \infty$ together with Proposition \ref{prop:The CasegEquiv1}. The upper bound is similarly obtained from the estimate
\[
\|  (R_K H_{g,\gamma})(a) \|_{\ell^p} = C_K(g;a) \leq \sup_{j\geq K} G(j)^{\frac{1}{p}}   \max\bigg\{   \bigg(\frac{d_j}{d_{j-1}}  \bigg)^{1-\gamma}  ,   \bigg(\frac{d_j}{d_{j-1}}  \bigg)^{\gamma} \bigg\}  C_K(\hat{g};a),
\]
given in Theorem \ref{thm:bdd}, using $\hat{g}\equiv 1$. The above holds when $a_n\geq 0 $ for all $n\in\mathbb N$, but on the left-hand side the triangle inequality can be used to obtain the statement for general $a$, while the right-hand side gets larger if we don restrict the weak null sequences to only contain $a_n\geq 0$.

Assume now that $\lim_k d_k/d_{k-1} = :\mu >1$ and consider the sequence $g^{(+)}$ given by $g_k^{(+)}= d_j-d_{j-1}$ if $k = d_{j-1}, \, j\in\mathbb N$ and zero otherwise. Similarly, define $g_k^{(-)}= d_j-d_{j-1}$ if $k = d_j-1, \, j\in\mathbb N$ and zero otherwise. For the means, we have $G^{(\pm)}\equiv 1 $.

Furthermore, from the proof of \eqref{eq:leqq}, it is evident that for every $(a_n)\in\ell^p$, by leaving out a term to obtain the inequality below, we have 
\[
\| R_K H_{g^{(+)},\gamma} (a)\|_{\ell^p}  =  \sup_{\|B\|_{\ell^{p'}} \leq 1} \sum_{j=K}^{\infty} B_j   (d_j-d_{j-1})^{\frac{1}{p}}  A_{d_{j-1}} \geq    \sup_{\|B\|_{\ell^{p'}} \leq 1} \sum_{j=K}^{\infty} B_j   (d_{j+1}-d_j)^{\frac{1}{p}}  A_{d_j}, 
\]
and
\[
\| R_K H_{g^{(-)},\gamma} (a) \|_{\ell^p}   = \sup_{\|B\|_{\ell^{p'}} \leq 1} \sum_{j=K}^{\infty} B_j   (d_j-d_{j-1})^{\frac{1}{p}}  A_{d_j-1}.
\]
From a calculation similar to \eqref{eq:estimateAk}, it follows that
\[
\frac{   A_{d_j-1}   }{   A_{d_j}  }  \leq  \bigg(1-\frac{1}{d_j}\bigg)^{\gamma-1}.
\]
Moreover,
\[
\frac{  d_j-d_{j-1}  }{    d_{j+1}-  d_j }  = \bigg(1-\frac{d_{j-1}}{d_j}\bigg) \bigg/ \bigg(\frac{d_{j+1}}{d_j} -1\bigg)
\]
so
\[
\limsup_j \bigg( \frac{  d_j-d_{j-1}  }{    d_{j+1}-  d_j } \bigg)^{\frac{1}{p}} \frac{   A_{d_j-1}   }{   A_{d_j}  }   \leq  \bigg( \frac{1}{\mu}\bigg)^{\frac{1}{p}}  .
\]
We can, therefore, choose $K$ large enough to ensure that
\[
\| R_K H_{g^{(-)},\gamma} (a) \|_{\ell^p} \leq  \bigg(\frac{1+\mu^{-1}  }{2} \bigg)^{\frac{1}{p}}  \| R_K H_{g^{(+)},\gamma} (a)\|_{\ell^p}.
\]
Since $a\in\ell^p$ is arbitrary, we obtain the statement via the fact that the weak null maximum coincides with the essential norm on $\ell^p$-spaces. In other words, we have proved there exists two different operators $H_{g^{(+)},\gamma} $ and $H_{g^{(-)},\gamma} $ with the same means $G^{(+)}=G^{(-)} = 1$, but with different essential norms.
\end{proof}

\section{The dual operator}\label{sec:DualOp}
We finish by reminding the reader that due to $\ell^p, \, 1<p<\infty$ being reflexive, all the results presented above can be translated to the dual operator of $H_{g,\gamma}$,
\[
H^*_{g,\gamma} \colon \ell^{p'} \to \ell^{p'} ,\quad H^*_{g,\gamma} (b) := \sum_{k=1}^{\infty}   \bigg(\frac{k}{n}\bigg)^{\gamma} \frac{g_k b_k}{n+k},
\]
because $\|  H_{g,\gamma}  \|_{ \ell^{p} \to \ell^{p} } = \|  H^*_{g,\gamma}  \|_{ \ell^{p'} \to \ell^{p'} }$ and $\|  H_{g,\gamma}  \|_{e, \ell^{p} \to \ell^{p} } = \|  H^*_{g,\gamma}  \|_{e, \ell^{p'} \to \ell^{p'} }$. Moreover, as in Remark \ref{rem:Gen}, we have 
\begin{remark}\label{rem:GenAdjoint}
The operators
\[
\ell^{p'} \to \ell^{p'},\quad   (b_n) \mapsto   \bigg[ n \mapsto  \sum_{k=1}^{\infty}   \bigg(\frac{k+l_2}{n+l_3}\bigg)^{\gamma} \frac{  b_{k+l_4} g_{k+k_1}  }{n+k+l_1} \bigg]
\]  
where $-2< l_1<\infty$, $-1<l_2,l_3<\infty$ and $l_4,k_1\in \mathbb N\cup \{0\}$ have the same essential norm, weak null maximum, and criteria for boundedness as $H^*_{g,\gamma}$ on $\ell^{p'}$.
\end{remark}

We now present the last theorem, which in addition to previous results uses duality.

\begin{theorem}\label{thm:essentialNormEstimate}
It holds that
\[
\| H_{g,\gamma} \|_{e,\ell^p\to\ell^p} = \| H^*_{g,\gamma} \|_{e,\ell^{p'}\to\ell^{p'}} = \lim_{K\to\infty} \sup_{\|a\|_{\ell^p}=1} C_K(|g|;a).
\]
In particular, if $1<\liminf_j d_j/d_{j-1} \limsup_j d_j/d_{j-1}<\infty$, then
\[
\| H_{g,\gamma} \|_{e,\ell^p\to\ell^p} = \| H^*_{g,\gamma} \|_{e,\ell^{p'}\to\ell^{p'}}  \asymp \limsup_j G(j)^{\frac{1}{p}},
\]
with respect to $g$.
\end{theorem}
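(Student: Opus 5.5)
The plan has two parts: the identity with $\lim_K \sup_a C_K$, and then the two-sided estimate in terms of $\limsup_j G(j)$.

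\textbf{The identity.} The first equality $\|H_{g,\gamma}\|_e=\|H^*_{g,\gamma}\|_e$ is the duality fact recalled at the start of Section \ref{sec:DualOp}. For the second equality I use $C_K(|g|;a)=\|R_K H_{|g|,\gamma}(a)\|_{\ell^p}$ with $R_K=Q_{d_{K-1}}$. Reducing to $g,a\ge 0$ as in the Preliminaries, this is the same as $C_K(g;a)$ up to taking moduli.

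Since $H_{g,\gamma}-R_KH_{g,\gamma}$ has finite rank, we get
\[
\|H_{g,\gamma}\|_e\le \|R_KH_{g,\gamma}\|=\sup_{\|a\|=1}C_K(|g|;a)
\]
for every $K$. The quantity $\sup_a C_K$ is nonincreasing in $K$, so the limit exists and dominates the essential norm.

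For the reverse inequality I use the adjoint. Write $P_K=I-R_K$ for the finite-rank projection. Then
\[
\|R_K H\|=\|H^*R_K\|,
\]
and I would estimate this by splitting $H^*R_K=H^*R_K Q_N+H^*R_K(I-Q_N)$. For each fixed $K$, the term $H^*R_K(I-Q_N)$ is not compact, so this splitting is not enough by itself. Instead I argue through the weak null maximum. For fixed $K$, pick $a^{(K)}$ nearly maximizing $\|R_KHa\|$. By the argument in the Preliminaries, a sequence of nearly maximizing $a$'s can be taken supported on $[M_K,\infty)$ with $M_K\to\infty$, provided the contribution of the first $M$ coordinates of $a$ to $R_KHa$ tends to $0$ as $K\to\infty$.

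That contribution is controlled by the first $M$ columns restricted to rows $k\ge d_{K-1}$. By boundedness these columns lie in $\ell^p$, so their tails vanish. A diagonal choice of $K$ then yields a weakly null sequence of near-maximizers. Hence
\[
\lim_K\sup_a C_K\le \textrm{wem}(H_{g,\gamma})=\|H_{g,\gamma}\|_e .
\]
This diagonal step is the part I expect to need the most care.

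\textbf{The estimate.} Apply Theorem \ref{thm:bdd} with $K\to\infty$. For the upper bound, take $\hat g$ with $\hat G\equiv1$, for example $\hat g\equiv1$, so that $C_K(\hat g;a)\le\|H_{1,\gamma}\|\,\|a\|$ by Proposition \ref{prop:The CasegEquiv1}. Since $\limsup_j d_j/d_{j-1}<\infty$, this gives
\[
\lim_K\sup_a C_K\lesssim \limsup_j G(j)^{1/p}.
\]

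For the lower bound, the second inequality of Theorem \ref{thm:bdd} gives
\[
\sup_a C_K\ge c\,\sup_{j\ge K}G(j)^{1/p}\,(1-d_{j-1}/d_j)^{1/p}.
\]
Because $\liminf_j d_j/d_{j-1}>1$, the factor $(1-d_{j-1}/d_j)^{1/p}$ is bounded below for large $j$. Letting $K\to\infty$ yields $\gtrsim\limsup_j G(j)^{1/p}$.

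The constants depend only on $p,\gamma,d$, not on $g$. I read the hypothesis as $1<\liminf_j d_j/d_{j-1}\le\limsup_j d_j/d_{j-1}<\infty$.
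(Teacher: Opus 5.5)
Your proposal is correct. Its second half, the two-sided estimate via both inequalities of Theorem~\ref{thm:bdd} (using $\hat g\equiv 1$ and Proposition~\ref{prop:The CasegEquiv1} for the upper bound), is exactly what the paper does.

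The difference lies in the key inequality $\lim_K\sup_a C_K(|g|;a)\le\|H_{g,\gamma}\|_e$.

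\textbf{The paper's route.} The paper works on the adjoint side. There $\|R_KH_{g,\gamma}\|=\|H^*_{g,\gamma}R_K\|$, and $R_K$ now truncates the \emph{input}. So near-maximizers $\alpha_K$ of $H^*_{g,\gamma}R_K$ can be taken in $R_KB_{\ell^{p'}}$, which makes them weakly null for free. This gives $\lim_K\|H^*_{g,\gamma}R_K\|\le\textrm{wem}(H^*_{g,\gamma})\le\|H^*_{g,\gamma}\|_e$. The statement for $H_{g,\gamma}$ then comes from the quoted identity $\|H_{g,\gamma}\|_e=\|H^*_{g,\gamma}\|_e$.

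\textbf{Your route.} You stay with $H_{g,\gamma}$ on $\ell^p$, where $R_K$ truncates the output. You compensate with the observation that $\|R_KH_{g,\gamma}(I-Q_M)\|\to 0$ as $K\to\infty$ for each fixed $M$ (finitely many columns, each in $\ell^p$). A slowly growing diagonal choice $M_K\to\infty$ then lets you take near-maximizers of $R_KH_{g,\gamma}$ inside $Q_{M_K}B_{\ell^p}$. Since $\|H_{g,\gamma}a\|\ge\|R_KH_{g,\gamma}a\|$, these weakly null vectors witness $\textrm{wem}(H_{g,\gamma})\ge\lim_K\|R_KH_{g,\gamma}\|$.

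\textbf{What each buys.} The paper's argument is shorter and needs no diagonal step. Yours proves $\|H_{g,\gamma}\|_e=\lim_K\sup_aC_K(|g|;a)$ without any duality. Combined with the paper's adjoint argument, it therefore actually yields $\|H_{g,\gamma}\|_e=\|H^*_{g,\gamma}\|_e$ for this operator, rather than quoting it.

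One correction to your discarded aside: with the paper's convention, $I-Q_N$ has finite rank, so $H^*_{g,\gamma}R_K(I-Q_N)$ \emph{is} compact. Following the adjoint there would have led you straight to the paper's argument.
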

\begin{proof}
Recall that in the proof of Theorem \ref{thm:exactEssNorm}, we used the projection $R_K:=Q_{d_{K-1}}$, which will again be useful, see \eqref{eq:projQ} for the definition of $Q_{d_{K-1}}$. 

We begin by noting that $K\mapsto \sup_{\|a\|_{\ell^p}=1} C_K(|g|;a)$ is decreasing and bounded from below, hence the limit exists as $K\to\infty$. The inequalities in Theorem \ref{thm:bdd} yield $\lim_{K\to\infty} \sup_{\|a\|_{\ell^p}=1} C_K(|g|;a) \asymp \limsup_j G(j)^{\frac{1}{p}}$ with respect to $g$. 

We are left to prove that
\[
 \| H^*_{g,\gamma} \|_{e,\ell^{p'}\to\ell^{p'}} = \lim_{K\to\infty} \sup_{\|a\|_{\ell^p}=1} C_K(|g|;a).
\]
To this end, observe that
\[
 \| H^*_{g,\gamma} \|_{e,\ell^{p'}\to\ell^{p'}} = \lim_{K\to\infty} \| H^*_{g,\gamma} R_K \|_{e,\ell^{p'}\to\ell^{p'}} \leq \lim_{K\to\infty} \| H^*_{g,\gamma} R_K \|_{\ell^{p'}\to\ell^{p'}} = \lim_{K\to\infty} \sup_{\|a\|_{\ell^p}=1} C_K(|g|;a).
\]
To prove the reverse inequality, similarly to the calculations done beneath \eqref{eq:projQ}, for every $K\in\mathbb N$, pick $\alpha_K\in R_K B_{\ell^{p'}}$ such that
\[
\| H^*_{g,\gamma} R_K \|_{\ell^{p'}\to\ell^{p'}} -  \| (H^*_{g,\gamma} R_K) (\alpha_K) \|_{\ell^{p'}} < \frac{1}{K}.
\]
Since $R_K (\alpha_K) = \alpha_K$, we have obtained a weakly null sequence $(\alpha_K)\subset B_{\ell^{p'}}$ such that
\[
\lim_{K\to\infty} \sup_{\|a\|_{\ell^p}=1} C_K(|g|;a) = \lim_{K\to\infty} \| H^*_{g,\gamma} R_K \|_{\ell^{p'}\to\ell^{p'}}  \leq  \lim_{K\to\infty} \| H^*_{g,\gamma} \alpha_K \|_{\ell^{p'}} \leq \textrm{wem} (H^*_{g,\gamma} ) \leq \| H^*_{g,\gamma} \|_{e,\ell^{p'}\to\ell^{p'}},
\] 
and we are done.
\end{proof}

\section{Acknowledgments}
The author was financially supported by the Magnus Ehrnrooth Foundation.

\bibliographystyle{abbrv}
\bibliography{bibliography}

\end{document}